\documentclass[11pt,leqno]{article}
\usepackage[margin=1in]{geometry} 
\usepackage{amssymb,amsfonts,amsmath,bbm,mathrsfs,stmaryrd,mathtools}
\usepackage{xcolor}
\usepackage{url}

\usepackage{graphicx}

\usepackage{accents}

\usepackage{extarrows}

\usepackage[shortlabels]{enumitem}
\usepackage{tensor}

\usepackage{xr}
\usepackage[T1]{fontenc}
\usepackage[utf8]{inputenc}

\usepackage[colorlinks,
linkcolor=black!75!red,
citecolor=blue,
pdftitle={},
pdfproducer={pdfLaTeX},
pdfpagemode=None,
bookmarksopen=true,
bookmarksnumbered=true,
backref=page]{hyperref}

\usepackage{tikz}
\usetikzlibrary{arrows,calc,decorations.pathreplacing,decorations.markings,decorations.shapes,intersections,shapes.geometric,through,fit,shapes.symbols,positioning,decorations.pathmorphing}

\makeatletter
\newlength\zig@L
\newlength\zig@La
\newlength\zig@Lb

\newcommand{\xzigrightarrow}[2][]{%
  \mathrel{%
    \settowidth{\zig@La}{$\scriptstyle #2$}%
    \settowidth{\zig@Lb}{$\scriptstyle #1$}%
    \zig@L=\zig@La\relax
    \ifdim\zig@Lb>\zig@L \zig@L=\zig@Lb\fi
    \advance\zig@L by 2.2em\relax
    \tikz[baseline=-0.65ex]{%
      \draw[->,
            line cap=round,
            decorate,
            decoration={zigzag,segment length=4pt,amplitude=1.1pt}]%
        (0,0) -- (\zig@L,0)
        node[midway,above=2pt] {$\scriptstyle #2$}%
        \if\relax\detokenize{#1}\relax\else
          node[midway,below=2pt] {$\scriptstyle #1$}%
        \fi
      ;
    }%
  }%
}
\makeatother

\makeatletter
\newcommand{\squigjoin}{1mu} 

\def\sqleft@{\sim}                    
\def\sqmid@{\sim\mkern-\squigjoin}    

\def\rightsquigarrowfill@{%
  \arrowfill@{\sqleft@}{\sqmid@}{\mkern-4mu\succ}%
}

\newcommand{\xrightsquigarrow}[2][]{%
  \ext@arrow 0359\rightsquigarrowfill@{#1}{#2}%
}
\makeatother

\makeatletter
\newcommand*\circled[1]{\tikz[baseline=(char.base)]{
    \node[shape=circle, draw, inner sep=0pt, 
    minimum height={\f@size},] (char) {\vphantom{WAH1g}#1};}}
\makeatother

\makeatletter
\DeclareRobustCommand\widecheck[1]{{\mathpalette\@widecheck{#1}}}
\def\@widecheck#1#2{%
    \setbox\z@\hbox{\m@th$#1#2$}%
    \setbox\tw@\hbox{\m@th$#1%
       \widehat{%
          \vrule\@width\z@\@height\ht\z@
          \vrule\@height\z@\@width\wd\z@}$}%
    \dp\tw@-\ht\z@
    \@tempdima\ht\z@ \advance\@tempdima2\ht\tw@ \divide\@tempdima\thr@@
    \setbox\tw@\hbox{%
       \raise\@tempdima\hbox{\scalebox{1}[-1]{\lower\@tempdima\box
\tw@}}}%
    {\ooalign{\box\tw@ \cr \box\z@}}}
\makeatother

\usepackage{braket}

\usepackage[amsmath,thmmarks,hyperref]{ntheorem}
\usepackage{cleveref}

\newcommand\nthalias[1]{\AddToHook{env/#1/begin}{\crefalias{lemma}{#1}}}

\nthalias{definition}
\nthalias{example}
\nthalias{examples}
\nthalias{remark}
\nthalias{remarks}
\nthalias{convention}
\nthalias{notation}
\nthalias{construction}
\nthalias{sketch}
\nthalias{theoremN}
\nthalias{propositionN}
\nthalias{corollaryN}
\nthalias{lemma}
\nthalias{proposition}
\nthalias{corollary}
\nthalias{theorem}
\nthalias{conjecture}
\nthalias{question}
\nthalias{assumption}

\creflabelformat{enumi}{#2#1#3}

\crefname{section}{Section}{Sections}
\crefformat{section}{#2Section~#1#3} 
\Crefformat{section}{#2Section~#1#3} 

\crefname{subsection}{\S}{\S\S}
\AtBeginDocument{%
  \crefformat{subsection}{#2\S#1#3}%
  \Crefformat{subsection}{#2\S#1#3}%
}

\crefname{subsubsection}{\S}{\S\S}
\AtBeginDocument{%
  \crefformat{subsubsection}{#2\S#1#3}%
  \Crefformat{subsubsection}{#2\S#1#3}%
}

\theoremstyle{plain}

\newtheorem{lemma}{Lemma}[section]

\newtheorem{corollary}[lemma]{Corollary}
\newtheorem{theorem}[lemma]{Theorem}

\theoremstyle{plain}
\theoremnumbering{Alph}

\theoremstyle{plain}
\theorembodyfont{\upshape}
\theoremsymbol{\ensuremath{\blacklozenge}}

\newtheorem{example}[lemma]{Example}

\newtheorem{remark}[lemma]{Remark}

\crefname{definition}{definition}{definitions}
\crefformat{definition}{#2definition~#1#3} 
\Crefformat{definition}{#2Definition~#1#3} 

\crefname{ex}{example}{examples}
\crefformat{example}{#2example~#1#3} 
\Crefformat{example}{#2Example~#1#3} 

\crefname{exs}{example}{examples}
\crefformat{examples}{#2example~#1#3} 
\Crefformat{examples}{#2Example~#1#3} 

\crefname{remark}{remark}{remarks}
\crefformat{remark}{#2remark~#1#3} 
\Crefformat{remark}{#2Remark~#1#3} 

\crefname{remarks}{remark}{remarks}
\crefformat{remarks}{#2remark~#1#3} 
\Crefformat{remarks}{#2Remark~#1#3} 

\crefname{convention}{convention}{conventions}
\crefformat{convention}{#2convention~#1#3} 
\Crefformat{convention}{#2Convention~#1#3} 

\crefname{notation}{notation}{notations}
\crefformat{notation}{#2notation~#1#3} 
\Crefformat{notation}{#2Notation~#1#3} 

\crefname{table}{table}{tables}
\crefformat{table}{#2table~#1#3} 
\Crefformat{table}{#2Table~#1#3}

\crefname{lemma}{lemma}{lemmas}
\crefformat{lemma}{#2lemma~#1#3} 
\Crefformat{lemma}{#2Lemma~#1#3} 

\crefname{proposition}{proposition}{propositions}
\crefformat{proposition}{#2proposition~#1#3} 
\Crefformat{proposition}{#2Proposition~#1#3} 

\crefname{propositionN}{proposition}{propositions}
\crefformat{propositionN}{#2proposition~#1#3} 
\Crefformat{propositionN}{#2Proposition~#1#3} 

\crefname{corollary}{corollary}{corollaries}
\crefformat{corollary}{#2corollary~#1#3} 
\Crefformat{corollary}{#2Corollary~#1#3} 

\crefname{corollaryN}{corollary}{corollaries}
\crefformat{corollaryN}{#2corollary~#1#3} 
\Crefformat{corollaryN}{#2Corollary~#1#3} 

\crefname{theorem}{theorem}{theorems}
\crefformat{theorem}{#2theorem~#1#3} 
\Crefformat{theorem}{#2Theorem~#1#3} 

\crefname{theoremN}{theorem}{theorems}
\crefformat{theoremN}{#2theorem~#1#3} 
\Crefformat{theoremN}{#2Theorem~#1#3} 

\crefname{enumi}{}{}
\crefformat{enumi}{#2#1#3}
\Crefformat{enumi}{#2#1#3}

\crefname{assumption}{assumption}{Assumptions}
\crefformat{assumption}{#2assumption~#1#3} 
\Crefformat{assumption}{#2Assumption~#1#3} 

\crefname{construction}{construction}{Constructions}
\crefformat{construction}{#2construction~#1#3} 
\Crefformat{construction}{#2Construction~#1#3} 

\crefname{sketch}{sketch}{Sketches}
\crefformat{sketch}{#2sketch~#1#3} 
\Crefformat{sketch}{#2Sketch~#1#3} 

\crefname{question}{question}{Questions}
\crefformat{question}{#2question~#1#3} 
\Crefformat{question}{#2Question~#1#3} 

\crefname{equation}{}{}
\crefformat{equation}{(#2#1#3)} 
\Crefformat{equation}{(#2#1#3)}

\numberwithin{equation}{section}

\theoremstyle{nonumberplain}
\theoremsymbol{\ensuremath{\blacksquare}}

\newtheorem{proof}{Proof}
\newcommand\pf[1]{\newtheorem{#1}{Proof of \Cref{#1}}}

\newcommand\bC{{\mathbb C}}

\newcommand\bZ{{\mathbb Z}}

\newcommand\cL{{\mathcal L}}

\DeclareMathOperator{\Spec}{\mathrm{Spec}}

\newcommand{\comment}[1]{}

\title{Weak$^*$ polynomial density, diffuseness and proper infinitude}
\author{Alexandru Chirvasitu}

\begin{document}

\date{}

\newcommand{\Addresses}{{
  \bigskip
  \footnotesize

  \textsc{Department of Mathematics, University at Buffalo}
  \par\nopagebreak
  \textsc{Buffalo, NY 14260-2900, USA}  
  \par\nopagebreak
  \textit{E-mail address}: \texttt{achirvas@buffalo.edu}

}}

\maketitle

\begin{abstract}
  The unitary group of a von Neumann algebra on an $\aleph_0$-dimensional Hilbert space is polynomially weak$^*$-dense (equivalently, $G_{delta}$ dense or residual) in the normal unit ball if and only if the canonical finite atomic quotient of the von Neumann algebra is trivial. The analogous result holds for the usual weak$^*$ topology, in which case one can drop the normality constraint. In the latter form, this answers a question of T. Eisner and generalizes the counterpart for full bounded-operator algebras.
\end{abstract}

\noindent \emph{Key words:
  $G_{\delta}$ subset;
  $W^*$-algebra;
  Polish space;
  atomic;
  diffuse;
  holomorphic;
  polynomially weak$^*$;
  properly infinite;
  type decomposition;
  weak$^*$ topology
}

\vspace{.5cm}

\noindent{MSC 2020: 46L10; 46L51; 47C15; 28A25; 54E52; 32K12; 32H02; 46A32
  
}


\section*{Introduction}

The present note is motivated by a problem posed in passing in \cite{MR2769030}. Following confirmation \cite[Theorem 2.2]{MR2769030} that the unitary group $U(\ell^2)$ on an $\aleph_0$-dimensional Hilbert space is weakly \emph{residual} \cite[\S 8.A]{kech_descr} in the unit ball $\cL\left(\ell^2\right)_{\le 1}$ of the full algebra $\cL(\ell^2)$ of bounded operators, \cite[Remark 2.4]{MR2769030} suggests it may be of some interest to characterize those von Neumann algebras (realizable over separable Hilbert spaces) for which the unitary group is weakly$^*$ residual in the unit ball. One of the results below is that characterization:

\begin{theorem}\label{th:u.wast.dense.iff}
  The following conditions on a von Neumann algebra $M\le \cL(H)$ realizable on a separable Hilbert space are equivalent.
  \begin{enumerate}[(a),wide]
  \item\label{item:th:u.wast.dense.iff:g.delta} The unitary group $U(M)$ is a weak$^*$-dense $G_{\delta}$ subspace of the unit ball $M_{\le 1}$.    

  \item\label{item:th:u.wast.dense.iff:resid} $U(M)\subseteq M_{\le 1}$ is weak$^*$-residual. 

  \item\label{item:th:u.wast.dense.iff:dns} $U(M)\subseteq M_{\le 1}$ is weak$^*$-dense.

  \item\label{item:th:u.wast.dense.iff:sml.ball} Analogue of \Cref{item:th:u.wast.dense.iff:g.delta}, \Cref{item:th:u.wast.dense.iff:resid} or \Cref{item:th:u.wast.dense.iff:dns}, with the normal subset $M_{\le 1, n}$ of the unit ball in place of $M_{\le 1}$.

  \item\label{item:th:u.wast.dense.iff:can.quot} The canonical finite atomic quotient of $M$ vanishes. 
  \end{enumerate}
\end{theorem}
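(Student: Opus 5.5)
We prove the cycle \Cref{item:th:u.wast.dense.iff:g.delta} $\Rightarrow$ \Cref{item:th:u.wast.dense.iff:resid} $\Rightarrow$ \Cref{item:th:u.wast.dense.iff:dns} $\Rightarrow$ \Cref{item:th:u.wast.dense.iff:can.quot} $\Rightarrow$ \Cref{item:th:u.wast.dense.iff:g.delta}. We then handle \Cref{item:th:u.wast.dense.iff:sml.ball} by rerunning the same arguments inside $M_{\le 1,n}$. Throughout we use that $M$ has separable predual, so $M_{\le 1}$ is weak$^*$-compact and metrizable, hence Polish and Baire.

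\textbf{The easy implications.} For \Cref{item:th:u.wast.dense.iff:g.delta} $\Rightarrow$ \Cref{item:th:u.wast.dense.iff:resid}: a dense $G_\delta$ in a Baire space is comeager. For \Cref{item:th:u.wast.dense.iff:resid} $\Rightarrow$ \Cref{item:th:u.wast.dense.iff:dns}: a residual set in a Baire space is dense.

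For \Cref{item:th:u.wast.dense.iff:dns} $\Rightarrow$ \Cref{item:th:u.wast.dense.iff:can.quot}, argue by contraposition. Suppose $M$ has a central summand $M_n(\bC)$ with $n<\infty$. The projection $M\to M_n(\bC)$ onto this summand is weak$^*$-continuous and maps $U(M)$ into $U(n)$. Since $U(n)$ is compact, hence closed, and is a proper subset of the ball $M_n(\bC)_{\le 1}$, the image of $U(M)$ is not dense there. Therefore $U(M)$ is not dense in $M_{\le 1}$.

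\textbf{The $G_\delta$ property.} This part of \Cref{item:th:u.wast.dense.iff:can.quot} $\Rightarrow$ \Cref{item:th:u.wast.dense.iff:g.delta} holds for every $M$. Fix a countable dense set $\{\xi_j\}\subset H$. For a contraction $x$, we have $x^*x=1$ if and only if $\|x\xi_j\|\ge\|\xi_j\|$ for all $j$. Each set $\{x:\|x\xi_j\|>\|\xi_j\|-1/k\}$ is weak(*)-open, because the norm is weakly lower semicontinuous. Hence the isometries in $M_{\le1}$ form a $G_\delta$. Applying the same argument to $x^*$, using that the adjoint is a weak$^*$ homeomorphism of $M_{\le1}$, shows that the co-isometries form a $G_\delta$. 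The intersection $U(M)$ is therefore $G_\delta$.

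\textbf{Density: splitting the algebra.} When the canonical finite atomic quotient vanishes, the type decomposition gives
\[
  M = M_{\mathrm{fin,diff}}\oplus M_{\mathrm{p.inf}},
\]
where the first summand is finite and diffuse and the second is properly infinite; the latter includes the $\cL(H_i)$ summands with $\dim H_i=\aleph_0$. Density is checked summand by summand.

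\emph{Finite diffuse summand.} Let $x=v|x|$ be the polar decomposition. By finiteness, extend the partial isometry $v$ to a unitary $u$. Set $w=|x|+i\sqrt{1-|x|^2}$, so that $x=u(w+w^*)/2$. Write $w^*=w e^{ih}$ with $h$ self-adjoint in $W^*(|x|)$. Choose a masa $A\ni h$. I expect $A$ to be diffuse, since a minimal projection of $A$ would be minimal in $M$. Because $A$ is diffuse, it contains projections $p_k$ with $p_k\to \tfrac12$ weak$^*$. The unitaries
\[
  u w\bigl(1-p_k+p_k e^{ih}\bigr)
\]
then converge weak$^*$ to $x$.

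\emph{Properly infinite summand.} This is where I expect the main obstacle, since partial isometries can no longer be completed to unitaries. The plan is to use $M\cong M\,\overline{\otimes}\,\cL(\ell^2)$ and to imitate \cite[Theorem 2.2]{MR2769030}. First approximate $x$ weak$^*$ by isometries or co-isometries, pushing the defect off to infinity via the shifts available in the $\cL(\ell^2)$ leg. Then approximate an isometry $s$ by unitaries of the form $s+q_k(\ldots)$, where $q_k\to0$ weak$^*$ are subprojections of $1-ss^*$ moved along the shift. Throughout, keep the approximants inside $M$ by working with matrix units of $\cL(\ell^2)$ whose entries are taken from $M$.

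\textbf{The normal ball.} For \Cref{item:th:u.wast.dense.iff:sml.ball}, the same $G_\delta$, Baire, obstruction and approximation arguments apply. The approximants constructed above are normal, and $M_{\le1,n}$ is closed under the relevant limits, so the equivalence carries over.
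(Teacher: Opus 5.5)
The genuine gap is the properly infinite summand, which carries half of the hard implication \Cref{item:th:u.wast.dense.iff:can.quot}$\Rightarrow$\Cref{item:th:u.wast.dense.iff:dns}. What you give there is a plan rather than an argument, and its one concrete mechanism fails. If $q\le 1-ss^*$ and $u=s+qy$, then $s^*q=0$, so $u^*u=1+y^*qy$, and unitarity forces $qy=0$, i.e.\ $u=s$. Moreover, for the unilateral shift $1-ss^*$ has rank one, so subprojections $q_k\to 0$ of it are eventually $0$, wherever you place the correction. A non-unitary isometry must instead be modified on a tail of its pure part. Put $e=1-ss^*$ and $r_k=\sum_{j\ge k}s^jes^{*j}$; this is a tail of an orthogonal sum in $M$, so $r_k\to 0$ strongly. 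Then
\[
  u_k=s(1-r_k)+es^{*k}+\sum_{j>k}s^jes^{*j}
\]
is a unitary in $M$ with $\|(u_k-s)\xi\|\le 2\|r_k\xi\|$.

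More simply, you can skip isometries altogether. Take $q_N=p_0+\cdots+p_N$, $r_N=p_{N+1}+\cdots+p_{2N+1}$, and $v\in M$ a partial isometry from $q_N$ onto $r_N$. Transport Halmos' dilation $\begin{pmatrix} y & (q_N-yy^*)^{1/2}\\ (q_N-y^*y)^{1/2} & -y^*\end{pmatrix}$ of $y=q_Nxq_N$ into $(q_N+r_N)M(q_N+r_N)$ via $v$, and add $1-q_N-r_N$. The resulting $U_N\in U(M)$ satisfy $q_NU_Nq_N=q_Nxq_N$, hence $U_N\to x$ weak$^*$ because $q_N\to 1$ strongly. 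The paper does not prove this step either; through \Cref{le:dns.iff.no.fin.at} it imports it from \cite[Theorem 5]{MR288588}.

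Everything else holds up, and two pieces differ from the paper. Your explicit $G_\delta$ description, via lower semicontinuity of $x\mapsto\|x\xi_j\|$ and $x\mapsto\|x^*\xi_j\|$, replaces the paper's appeal to Polishness of $U(M)$ plus \cite[Theorem A.1]{tak1}. Your finite diffuse argument (unitary polar part by finiteness, $|x|=\tfrac12(w+w^*)$, diffuse masa, projections tending to $\tfrac12$) is a correct self-contained proof, on that summand, of the density the paper cites from \cite[Theorem 1]{MR52695}.

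For \Cref{item:th:u.wast.dense.iff:sml.ball}, though, ``$M_{\le 1,n}$ is closed under the relevant limits'' is not a valid justification. Normal contractions are in general not weak$^*$-closed (in $\cL(\ell^2)$ they are weak$^*$-dense in the ball), so $M_{\le 1,n}$ is not automatically Baire. Density and the $G_\delta$ property pass formally to any $Y$ with $U(M)\subseteq Y\subseteq M_{\le 1}$. Your obstruction also works verbatim in the normal ball, since $0$ is normal. The step that needs an extra word is residual $\Rightarrow$ dense in $M_{\le 1,n}$. One way: write $M=F\times N$ with $F$ the finite atomic part. Then $F_{\le 1,n}$ is compact metrizable and $N_{\le 1,n}$ has the dense Polish subspace $U(N)$, so the product is Baire. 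The paper also passes over this point.
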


Recall also \cite[Definition 2.1]{MR3101250}'s \emph{weak polynomial topology} (not even linear, let alone locally convex: \Cref{ex:wp.not.add}): that of weak convergence for arbitrary powers:
\begin{equation*}
  x_{\lambda}
  \xrightarrow[\quad\lambda\quad]{\quad\text{polynomial weak}\quad}
  x
  \quad
  \iff
  \quad
  \forall\left(n\in \bZ_{\ge 0}\right)
  \left(
    x^n_{\lambda}
    \xrightarrow[\quad\lambda\quad]{\quad\text{weak}\quad}
    x^n
  \right).
\end{equation*}
The topology dominates the weak, and on bounded subsets is intermediate between it and the strong. \Cref{th:u.wast.dense.iff} has an analogue (albeit a partial one) in that context (where we naturally substitute ``weak$^*$'' for ``weak'', though it will make no difference on bounded sets once von Neumann algebras are realized spatially on Hilbert spaces).

\begin{theorem}\label{th:u.wast.dense.iff.pw}
  The following conditions on a von Neumann algebra $M\le \cL(H)$ realizable on a separable Hilbert space are equivalent.
  \begin{enumerate}[(a),wide]
  \item\label{item:th:u.wast.dense.iff.pw:g.delta} The unitary group $U(M)$ is a polynomially weak$^*$-dense $G_{\delta}$ subspace of the normal unit ball $M_{\le 1,n}$.    

  \item\label{item:th:u.wast.dense.iff.pw:resid} $U(M)\subseteq M_{\le 1,n}$ is polynomially weak$^*$-residual. 

  \item\label{item:th:u.wast.dense.iff.pw:dns} $U(M)\subseteq M_{\le 1,n}$ is polynomially weak$^*$-dense.

  \item\label{item:th:u.wast.dense.iff.pw:can.quot} The canonical finite atomic quotient of $M$ vanishes. 
  \end{enumerate}
\end{theorem}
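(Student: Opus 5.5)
We prove the cycle \Cref{item:th:u.wast.dense.iff.pw:g.delta} $\Rightarrow$ \Cref{item:th:u.wast.dense.iff.pw:resid} $\Rightarrow$ \Cref{item:th:u.wast.dense.iff.pw:dns} $\Rightarrow$ \Cref{item:th:u.wast.dense.iff.pw:can.quot} $\Rightarrow$ \Cref{item:th:u.wast.dense.iff.pw:g.delta}. Here $M_{\le 1,n}$ is the set of normal contractions.

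\textbf{The formal implications and the $G_\delta$ claim.}
\begin{itemize}
\item The first two implications are formal, because a dense $G_\delta$ is residual and a residual set is dense.
\item The latter needs the Baire property of $M_{\le 1,n}$ in the polynomial weak$^*$ topology.
\item The cheaper route is to note that density is all we use downstream. Residuality then follows from dense $+$ $G_\delta$ once we know the ambient space is Baire.
\item For this, on bounded sets of a separable predual the polynomial weak$^*$ topology on normal contractions is metrizable: use countably many vector functionals applied to all powers. I would also check that it is completely metrizable, or at least that $U(M)$ is Polish in it. On $U(M)$ the topology agrees with the strong one, since $\|ux\|^2=\langle u^*ux,x\rangle$ and $u^*=u^{-1}$ is a weak limit of powers via spectral arguments. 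Alternatively one can lean on the paper's treatment of \Cref{th:u.wast.dense.iff}.
\item The $G_\delta$ property of $U(M)$ inside $M_{\le1,n}$: a normal contraction $x$ is unitary iff $\|x\xi_k\|\ge\|\xi_k\|$ and $\|x^*\xi_k\|\ge\|\xi_k\|$ for a dense sequence $\xi_k$.
\item These are lower-semicontinuity conditions. For normal $x$, $\|x\xi\|^2=\langle x^*x\xi,\xi\rangle$, and $x^*x$ is approachable via polynomial data. Concretely, I would show that $x\mapsto\|x\xi\|$ is polynomially weak$^*$ lower semicontinuous on normal contractions, giving a countable intersection of open sets.
\end{itemize}

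\textbf{\Cref{item:th:u.wast.dense.iff.pw:dns} $\Rightarrow$ \Cref{item:th:u.wast.dense.iff.pw:can.quot}.} Suppose the finite atomic quotient $M\to F=\bigoplus M_{n_i}$ is nonzero. Compose with the projection onto a single summand $M_{n}(\bC)$ via a central projection $z$.
\begin{itemize}
\item The map $x\mapsto xz$ is weak$^*$ continuous and multiplicative, so it is polynomially weak$^*$ continuous.
\item It sends $U(M)$ into $U(n)$ and $M_{\le1,n}$ onto the normal contractions of $M_n$.
\item In finite dimensions all these topologies coincide with the norm topology, and $U(n)$ is closed there. So the image of $U(M)$ is not dense, contradicting \Cref{item:th:u.wast.dense.iff.pw:dns}.
\end{itemize}

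\textbf{\Cref{item:th:u.wast.dense.iff.pw:can.quot} $\Rightarrow$ density (main obstacle).} Given a normal contraction $x\in M$, write $x=\int_{\mathbb D}\lambda\,dE$ with spectral projections in $M$. Approximate $x$ in norm by $x'=\sum_j\lambda_jp_j$ with $|\lambda_j|<1$ and $\sum_j p_j=1$, since norm approximation controls all powers uniformly. It then suffices to approximate a scalar $\lambda p$ with $|\lambda|\le1$ on each block $pMp$ by unitaries $u$ with $u^k\to\lambda^kp$ weakly for all $k\ge1$. The vanishing of the finite atomic quotient means each $pMp$ has no finite type I atomic summand, so it is diffuse or properly infinite.
\begin{itemize}
\item \emph{Diffuse case.} Choose a diffuse abelian subalgebra $L^\infty[0,1]$ inside $pMp$. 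Find unitaries $e^{i f_m}$ whose pushforward measures converge weakly to a probability measure on the circle with $k$-th moments $\lambda^k$ for all $k\ge1$. Such a measure is the Poisson measure at $\lambda$.
\item Concretely, take $f_m(t)=\phi(mt \bmod 1)$, where $\phi$ pushes Lebesgue measure to the Poisson measure. Weak convergence of the rapidly oscillating functions to their averages gives $u_m^k\to\lambda^kp$ weak$^*$.
\item \emph{Properly infinite case.} $pMp$ contains a unilateral shift $s$ with $s^{*k}\to0$ strongly. Use the Sz.-Nagy--type dilation picture: a unitary dilation of the contraction $\lambda$ realized by bilateral shifts in $pMp$, in which powers of the unitary compress to $\lambda^k$. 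Conjugating the compressions away to infinity by the shift yields $u_m$ with $u_m^k\to\lambda^k p$ weakly.
\end{itemize}
I expect the properly infinite bookkeeping, i.e. building the dilating unitaries inside $pMp$ and proving the simultaneous convergence of all powers, to be the main obstacle. Finally, assemble the blocks via $\sum_j$. Weak$^*$ convergence is preserved under finite direct sums, and by a diagonal argument for countably many blocks.
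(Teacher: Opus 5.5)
Your (c)$\Rightarrow$(d) argument and your $G_\delta$ argument are fine. In fact $U(M)$ is already weakly $G_\delta$ in $M_{\le 1,n}$, since a normal isometry is unitary and $x\mapsto\|x\xi\|$ is weakly lower semicontinuous. The density step, however, fails at the claim that vanishing of the finite atomic quotient passes to every corner $pMp$ cut out by a spectral projection. That is false for non-central $p$: in the properly infinite atomic summand, e.g.\ $M=\cL(\ell^2)$, spectral projections can have finite rank, and then $pMp$ is a matrix algebra.

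Concretely, take $x=1-e_{00}$ with $e_{00}$ the projection onto $\bC e_0$. Any spectral step approximation with $\|x-x'\|<1/2$ has $e_{00}$ as the block carrying the value near $0$. But $e_{00}Me_{00}\cong\bC$ contains no unitary near $0$. No variant of the blockwise or abelian strategy can rescue this. Every unitary $u$ commuting with $e_{00}$ has $|\langle ue_0,e_0\rangle|=1$, whereas $\langle xe_0,e_0\rangle=0$. Also, $e_{00}$ is an atom of every abelian subalgebra containing it. So the approximating unitaries must mix the spectral subspaces of $x$.

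The repair, which is the paper's route, is to split centrally first, $M\cong M_{<\infty}\times M_{\mathrm{pr}\infty}$.

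\emph{Finite summand.} It is diffuse by hypothesis, and corners of a diffuse algebra are diffuse, since a minimal projection of $pMp$ is minimal in $M$. So here your block reduction and Poisson-measure construction in a unital copy of $L^\infty[0,1]$ go through. This is an explicit version of the paper's argument: pass to a (necessarily diffuse) masa containing $x$, then use the convex-hull fact $(1,\lambda,\dots,\lambda^m)\in\mathrm{cvx}\{(1,z,\dots,z^m):|z|=1\}$.

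\emph{Properly infinite summand.} Here you must approximate the whole contraction at once, not scalars on corners, as in \Cref{th:wp.when.untr.dns.prop.inf}. Realize $M\cong M\bar\otimes\cL(\ell^2)$ via matrix units $u_{mn}$. Use isometric dilations of $x$ itself to get polynomial weak$^*$ density of isometries. Then pass from isometries to unitaries via the Wold decomposition, which is internal to $M$.
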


\subsection*{Acknowledgments}

I am grateful for helpful remarks from T. Eisner on an early version of the draft. 


\section{Large $G_{\delta}$ unitary groups in von Neumann algebras}\label{se:lg.unit}

In the context of \Cref{th:u.wast.dense.iff}, \emph{density} is easily characterized; in stating \Cref{le:dns.iff.no.fin.at} we remind the reader some of the standard background on canonical $W^*$-algebra decompositions.
\begin{itemize}[wide]
\item There is a unique central splitting \cite[\S I.6.7, Corollary 1]{dixw} $M\cong M_{<\infty}\times M_{\text{pr}\infty}$ with the left- (right-)hand factor \emph{finite} (respectively \emph{properly infinite}). 

\item There is also a unique central splitting \cite[\S 10.21]{strat} $M\cong M_{a}\times M_{\not a}$ into an \emph{atomic} and a \emph{non-atomic} (or \emph{diffuse} \cite[\S 29.2]{strat}) component: the former means that every non-zero projection dominates a non-zero minimal projection, while the latter is the strong negation to the effect that there are no minimal non-zero projections. 
\end{itemize}
Being central, these can be combined into the obvious finer (four-factor) decomposition. 

\begin{lemma}\label{le:dns.iff.no.fin.at}
  The unitary group of a $W^*$-algebra is weak$^*$-dense in its unit ball if and only if its canonical finite atomic quotient vanishes. 
\end{lemma}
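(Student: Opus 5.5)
The plan has two directions. For necessity, I use the central decomposition $M\cong M_{<\infty,a}\times M'$ from the bullets preceding the statement. The projection onto a central summand is a normal, weak$^*$-continuous unital $*$-homomorphism. It therefore carries unitaries to unitaries and the unit ball onto the unit ball, and preserves weak$^*$ density. So it suffices to show that if $N=M_{<\infty,a}\ne 0$, then $U(N)$ is not weak$^*$-dense in $N_{\le 1}$.

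A finite atomic $W^*$-algebra is a product $\prod_i M_{n_i}(\bC)$ of finite-dimensional matrix algebras. For a single factor $M_{n}(\bC)$, all topologies agree, and $U(n)$ is compact and proper in the ball, so it is closed and not dense. For the general product, I project onto one nonzero factor $M_{n_i}(\bC)$. This is again a weak$^*$-continuous surjection of balls onto balls that sends unitaries to unitaries, and density fails there. This handles the "only if" direction.

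For sufficiency, assume the finite atomic part vanishes. Then $M\cong M_{\mathrm{pr}\infty}\times M_{<\infty,\not a}$. Since a product of weak$^*$-dense subsets is weak$^*$-dense in the product ball (the weak$^*$ topology on a product of preduals is the product topology on bounded sets), it suffices to treat the properly infinite and the finite diffuse cases separately. Within each, by the Kaplansky/polar-decomposition style reductions, I aim to approximate an arbitrary contraction $x$ weak$^*$ by unitaries.

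\emph{Properly infinite case.} Here I use the standard halving: $1=p+q$ with $p\sim q\sim 1$. Given a contraction $x$, I form the unitary dilation
\begin{equation*}
  \begin{pmatrix} x & (1-xx^*)^{1/2}\\ (1-x^*x)^{1/2} & -x^*\end{pmatrix}\in M_2(M)\cong M.
\end{equation*}
Iterating via isometries $s_k$ with $s_k^*s_k=1$ and $\sum s_ks_k^*=1$, I produce unitaries $u_k$ with $s_k^*u_k s_k\to$ \ldots. Cleaner is the shift trick: choose isometries $v_k\in M$ with $v_k^*\to 0$ weak$^*$, a "shift to infinity," which exists since $M$ contains a copy of $\cL(\ell^2)$ unitally. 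Then conjugate the dilation so that the corner is $x$ and the off-diagonal parts are pushed off to infinity. This mimics \cite[Theorem 2.2]{MR2769030} for $\cL(\ell^2)$, and I would transport that argument through a unital embedding $\cL(\ell^2)\bar\otimes N\cong M$, which is available for properly infinite $M$.

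\emph{Finite diffuse case.} I expect this to be the main obstacle. My approach is to first reduce, via polar decomposition $x=w|x|$ with $w$ extendable to a unitary (possible in finite algebras), to approximating a positive contraction $a$. Then I use the spectral theorem in the abelian algebra generated by $a$ together with diffuseness: approximate $a$ in norm by $\sum_j \lambda_j e_j$. Each $e_j$ is split, by diffuseness, into many small subprojections. On each small piece I replace $\lambda_j$ by a unitary $e^{i\theta}$ with $\cos\theta=\lambda_j$, with the phase $\pm\theta$ alternating across fine subdivisions. The main check is that these alternating-phase unitaries converge weak$^*$ to $\lambda_j e_j$. I would do this by arranging a Rademacher-type sequence of self-adjoint unitaries $r_m$ in the diffuse masa under $e_j$ with $r_m\to 0$ weak$^*$. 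Such a sequence exists in any diffuse abelian von Neumann algebra, since it is $L^\infty$ of a non-atomic measure. I would then take $\lambda_j e_j+i\sqrt{1-\lambda_j^2}\,r_m e_j$.
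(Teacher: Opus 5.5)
Your argument is correct and takes a genuinely different, self-contained route. Necessity is handled exactly as in the paper: project centrally onto a nonzero $M_n(\bC)$ summand, where $U(n)$ is compact and proper in the ball.

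For sufficiency the paper only cites two external results: Dye's theorem for non-atomic algebras and \cite[Theorem 5]{MR288588} for properly infinite ones. These overlapping classes jointly cover every $M$ with vanishing finite atomic quotient. You instead split exactly as $M_{\mathrm{pr}\infty}\times M_{<\infty,\not a}$ and prove both pieces by hand.

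In the finite diffuse piece, finiteness gives $1-r(x)\sim 1-l(x)$. So the polar part extends to a unitary $w$ with $x=w|x|$. The unitaries $\sum_j\bigl(\lambda_je_j+i\sqrt{1-\lambda_j^2}\,r_{m,j}\bigr)$, with $r_{m,j}=r_{m,j}^*$, $r_{m,j}^2=e_j$ and $r_{m,j}\to 0$ weak$^*$ in a masa of $e_jMe_j$, then converge to a norm approximant of $|x|$. This is the device of \Cref{re:rec.dye}. Restricting it to the finite summand is exactly what legitimizes the polar-decomposition step: a non-unitary isometry in a properly infinite algebra is its own polar part and has no unitary extension. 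The masa is diffuse by the argument in the proof of \Cref{cor:norm.pw.approx}. The Rademacher sequence can be built on each $\sigma$-finite summand, so no separability is used. The paper's route buys brevity; yours shows precisely where finiteness, diffuseness and proper infinitude enter.

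The properly infinite case needs tightening, since as written it is not yet a construction. Conjugating by shift-type isometries $v_k$ moves $x$ itself off to infinity: $v_kxv_k^*\to 0$ when $v_k^*\to 0$ strongly. A single halving $M\cong M_2(M)$ only gives a unitary whose corner is an isomorphic copy of $x$ in $eMe$, not something weak$^*$-close to $x$. What works, and what transporting the $\cL(\ell^2)$ argument through $\cL(\ell^2)\bar\otimes N$ really amounts to, is compression.

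Take orthogonal $p_k\sim 1$ with $\sum_kp_k=1$ and put $E_n:=\sum_{k\le n}p_k$, so $E_n\sim 1\sim 1-E_n$. Pick $w_n$ with $w_n^*w_n=E_n$ and $w_nw_n^*=1-E_n$, let $y_n:=E_nxE_n$, and set
\begin{equation*}
  U_n:=y_n+w_n\left(E_n-y_n^*y_n\right)^{1/2}+\left(E_n-y_ny_n^*\right)^{1/2}w_n^*-w_ny_n^*w_n^*.
\end{equation*}
Under the isomorphism $M\cong M_2(E_nME_n)$ induced by $w_n$, this is the Halmos unitary dilation of $y_n$, so $U_n\in U(M)$. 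Since $y_n\to x$ strongly, and every other term has range or source under $1-E_n\to 0$ strongly, $U_n\to x$ weak$^*$.
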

\begin{proof}
  The harder direction (density assuming vanishing) is a conjunction of \cite[Theorem 1]{MR52695} (density given non-atomicity) and \cite[Theorem 5]{MR288588} (density given proper infinitude), and the converse is immediate: a non-vanishing quotient will have a finite-matrix-algebra factor, and the density property (which certainly does not hold for $M_n$) plainly survives the passage to central quotients. 
\end{proof}

This suffices to settle the question advertised above.

\pf{th:u.wast.dense.iff}
\begin{th:u.wast.dense.iff}
  Plainly,
  \begin{equation*}
    \text{\Cref{item:th:u.wast.dense.iff:g.delta}}
    \xRightarrow{\quad\text{formal}\quad}
    \text{\Cref{item:th:u.wast.dense.iff:resid}}
    \xRightarrow{\quad\text{formal}\quad}
    \text{\Cref{item:th:u.wast.dense.iff:dns}}
    \xLeftrightarrow{\quad\text{\Cref{le:dns.iff.no.fin.at}}\quad}
    \text{\Cref{item:th:u.wast.dense.iff:can.quot}}.
  \end{equation*}
  To verify \Cref{item:th:u.wast.dense.iff:dns} $\Rightarrow$ \Cref{item:th:u.wast.dense.iff:g.delta}:
  \begin{itemize}[wide]
  \item $U(M)$ is \emph{Polish} \cite[\S IV.8]{tak1} (for it is so in the \emph{strong$^*$ topology} as noted in \cite[proof of Theorem IV.8.28]{tak1}, and said topology is also weak/weak$^*$ \cite[Remark II.4.10]{tak1} on $U(H)$): loc. cit. refers to the plain weak topology, coinciding with the weak$^*$ on bounded sets \cite[\S I.3.2]{dixw};
    
  \item so also $G_{\delta}$ in the again Polish unit ball $M_{\le 1}\subseteq \cL(H)_{\le 1}$ by \cite[Theorem A.1]{tak1}.
  \end{itemize}
  The displayed conditions thus being equivalent, \Cref{item:th:u.wast.dense.iff:sml.ball} joins the cluster.
\end{th:u.wast.dense.iff}

\begin{example}\label{ex:wp.not.add}
  Consider the operators $A_n,B_n\in \cL\left(\ell^2(\bZ_{\ge 0})\right)$, expressed as matrices in the standard basis $(e_n)_{n\ge 0}$, containing a single non-zero entry in position $(0,n)$ (respectively $(n,0)$). We have
  \begin{equation*}
    A_n,B_n
    \xrightarrow[\quad n\quad]{\quad\text{weak$^*$ polynomially}\quad}
    0,
  \end{equation*}
  while $(A_n+B_n)^2$ converges weakly to the projection with a single unit entry in position $(0,0)$. 
\end{example}

We tackle the issue of polynomial weak$^*$ ($G_{\delta}$) density separately for several classes of von Neumann algebras.

\begin{theorem}\label{th:wp.when.untr.dns.prop.inf}
  For a properly infinite $W^*$-algebra $M\le \cL(H)$, $\dim H=\aleph_0$ the unitary group $U(M)\subseteq M_{\le 1}$ polynomially weak$^*$ $G_{\delta}$ dense.
\end{theorem}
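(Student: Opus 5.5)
The plan has two independent parts: the $G_{\delta}$ property, which comes for free from the proof of \Cref{th:u.wast.dense.iff}, and density, which I will obtain from an $N$-step unitary dilation placed inside $M$ using proper infinitude.

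\textbf{$G_{\delta}$ property.} On bounded sets, and once $M\le \cL(H)$ is realized spatially, the polynomial weak$^*$ topology is finer than the weak$^*$ (= weak) topology. The proof of \Cref{th:u.wast.dense.iff} shows that $U(M)$ is weak$^*$-$G_{\delta}$ in $M_{\le 1}$: it is Polish, sitting in the Polish ball. A $G_{\delta}$ set for a coarser topology remains $G_{\delta}$ for a finer one. So it only remains to prove polynomial weak$^*$ density.

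\textbf{First reduction.} A basic polynomial weak$^*$ neighborhood of $x\in M_{\le 1}$ is given by finitely many vectors $\xi_i,\eta_i$, a bound $N$ on the exponents, and an $\varepsilon>0$. We must find $u\in U(M)$ with $|\langle (u^k-x^k)\xi_i,\eta_i\rangle|<\varepsilon$ for all $i$ and all $k\le N$.

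Proper infinitude gives $M\cong M\bar\otimes \cL(\ell^2)$. Let $p_n$ be $1\otimes(\text{projection onto the first } n \text{ basis vectors})$. Then $p_n\uparrow 1$ strongly and $1-p_n\sim 1$.

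Set $x_n=p_nxp_n$. Then $x_n\to x$ strongly$^*$. Since multiplication is jointly strongly continuous on bounded sets, $x_n^k\to x^k$ strongly for every $k$. It therefore suffices to approximate $y:=x_n$ for one fixed large $n$, with $p:=p_n$ and $q:=1-p\sim 1$. Choosing $n$ large also makes $\xi_i,\eta_i$ as close as desired to $pH$. The remaining task is to find a unitary $u\in M$ with
\begin{equation*}
  pu^kp=y^k \quad (0\le k\le N),
\end{equation*}
as an operator on $pH$. Then $\langle u^k\xi_i,\eta_i\rangle\approx\langle pu^kp\,\xi_i,\eta_i\rangle=\langle y^k\xi_i,\eta_i\rangle$, which finishes the argument.

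\textbf{Main step: the Egerváry $N$-dilation.} For a contraction $T=y\in pMp$, the Egerváry construction gives an $(N+1)\times(N+1)$ block unitary $U$ on $(pH)^{\oplus(N+1)}$ whose $(1,1)$-corner satisfies $(U^k)_{11}=T^k$ for $k\le N$. Its entries are $T$, $T^*$, the defect operators $(p-T^*T)^{1/2}$ and $(p-TT^*)^{1/2}$, $p$ and $0$. By continuous functional calculus in $pMp$, all of these lie in $pMp$, so $U\in M_{N+1}(pMp)$.

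The main obstacle is placing this matrix unitary inside $M$ with its first corner equal to $p$. Since $q\sim 1$ and $M$ is properly infinite, $q$ dominates $N$ mutually orthogonal projections $p_1,\dots,p_N$, each equivalent to $p$. Choose partial isometries $v_j\in M$ with $v_j^*v_j=p$ and $v_jv_j^*=p_j$, and set $v_0=p$. The map
\begin{equation*}
  [a_{jl}]\mapsto \sum_{j,l} v_ja_{jl}v_l^*
\end{equation*}
is then a $*$-isomorphism of $M_{N+1}(pMp)$ onto $eMe$, where $e=p+\sum_{j\ge 1} p_j$. It carries $U$ to a unitary of $eMe$ whose $p$-corner powers are exactly $y^k$ for $k\le N$. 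Finally, let $u$ be this unitary plus $1-e$. Then $u\in U(M)$ and $pu^kp=y^k$ for $0\le k\le N$, as required, so $U(M)$ is polynomially weak$^*$ dense and the theorem follows.
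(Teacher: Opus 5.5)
Your argument is correct. The $G_{\delta}$ half is exactly the paper's: the property transports from the coarser weak$^*$ topology. Your density argument, however, takes a genuinely different route.

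The paper uses infinite one-sided dilations. Inside $M\cong M\bar\otimes\cL(\ell^2)$ it builds, for a contraction $x$ in the corner $p_0Mp_0$, isometries $x_n$ that push the defect $(1-x^*x)^{1/2}$ off along the matrix units $u_{n0}$ and $u_{n+m,m}$. These satisfy $p_0x_n^kp_0=x^k$ for every $k$, which yields polynomial weak$^*$ density of the isometries. A second step then approximates isometries by unitaries via the Wold decomposition, which is internal to any von Neumann algebra.

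You instead truncate $x$ to $p_nxp_n$ and apply the finite Egerv\'ary $N$-step unitary dilation. Its entries lie in $pMp$ by functional calculus, and you implant it in $M$ through $N$ copies of $p$ sitting under $1-p$. This produces a unitary directly, with exact compressions $pu^kp=y^k$ for $k\le N$, which is all a basic neighborhood requires.

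What your route buys:
\begin{itemize}
\item It is more elementary and self-contained.
\item It makes explicit the passage from an arbitrary $x\in M_{\le 1}$ to a corner. The paper delegates this to the ``dilation/weak$^*$-density connection''; its $x_n$ only handle contractions already sitting in $p_0Mp_0$.
\item It skips the isometry-to-unitary step entirely. Incidentally, that step holds for the strong topology, not the strong$^*$ topology the paper names, since a strong$^*$ limit of unitaries is again unitary. The strong version suffices there, because powers of bounded strongly convergent nets converge strongly.
\end{itemize}

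What the paper's route buys:
\begin{itemize}
\item Genuine power dilations, valid for all exponents at once.
\item As by-products, that isometries are polynomially weak$^*$ dense in corners, and that unitaries are strongly dense among the isometries of an arbitrary von Neumann algebra.
\end{itemize}
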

\begin{proof}
  Being $G_{\delta}$ transports to the stronger topology, so the one point left to settle is density assuming proper infinitude. The matricial techniques applicable \cite[Theorem 1]{zbMATH03749579} to $\cL(H)$ transport over well to the properly infinite context. Identify $M\cong M\otimes \cL(\ell^2)$ (cf. the proof of \cite[Theorem 5]{MR288588}), by selecting a sequence of projections and partial isometries
  \begin{equation*}
    u_{nn}=p_n\sim 1
    ,\quad 
    u_{mn}
    ,\quad
    m,n\in \bZ_{\ge 0}
    ,\quad
    u^*_{nm}=u_{mn}
    ,\quad
    u_{mn}u_{pq}=\delta_{np}\delta_{mq} p_m
  \end{equation*}
  with ``$\sim$'' denoting the usual notion \cite[Definition V.1.2]{tak1} of projection \emph{equivalence} in a $W^*$-algebra. 

  As any contraction $x\in p_0 M p_0\cong M$ admits isometric \emph{dilations}
  \begin{equation*}
    x_n:=x+u_{n0}(1-x^*x)^{1/2} + \sum_{m\ge 1}u_{n+m,m}      
  \end{equation*}
  with
  \begin{equation*}
    p_0 x_np_0
    \xrightarrow[\quad n\quad]{\quad\text{weak$^*$ polynomially}\quad}
    x,
  \end{equation*}
  the dilation/weak$^*$-density connection noted in \cite[Problem 224]{hal_hspb_2e_1982} proves isometries weak$^*$ polynomially dense in $M_{\le 1}$; this reduces the problem to proving unitaries dense in the set of isometries instead. This, however, holds in any von Neumann algebra whatsoever, with respect to the strong$^*$ topology: the usual $\cL(H)$ proof (e.g. \cite[Solution 225]{hal_hspb_2e_1982} or \cite[Theorem 1, step (III)]{zbMATH03749579}) uses the \emph{Wold decomposition} \cite[\S 4.3, Exercise (6)]{arv_spec} $V=V_u\oplus V_{p}$ of an arbitrary isometry as a direct sum of a unitary $V_u$ and a sum $V_p$ (a \emph{pure} isometry) of unilateral shifts on $\ell^2$, and that decomposition is internal to any von Neumann algebra $M\ni V$. Indeed,
  \begin{equation*}
    \left(\text{projection on }\mathrm{Im}~V_u\right)
    =
    \lim_n^{\text{weak$^*$}}V^n V^{*n}.
  \end{equation*}
  This concludes the proof. 
\end{proof}

\begin{remark}\label{re:unitary.power-dil}
  \Cref{th:wp.when.untr.dns.prop.inf} can ultimately be traced back to \cite[Th\'eor\`eme I]{zbMATH03132193}, ensuring the existence of \emph{unitary power dilations} \cite[Problem 227]{hal_hspb_2e_1982} for contractive operators on infinite-dimensional Hilbert spaces:
  \begin{equation*}
    \forall\left(A\in \cL(H)\right)
    \left(
      \|A\|\le 1
      \xRightarrow{\quad}
      \exists\left(\text{unitary }U,\ \text{projection }P\in \cL(K)\right)
      \left(
        \forall n
        \left(
          A^n=PU^nP
        \right)
      \right)
    \right)
  \end{equation*}
  for some Hilbert space $K\ge H$. Weak polynomial approximability by unitaries follows from the usual dilation/weak-closure link (\cite[Problem 224]{hal_hspb_2e_1982}, technique implicit in \cite[proof of Theorem 5]{MR288588}).
\end{remark}

There is also a type-I version of \Cref{th:wp.when.untr.dns.prop.inf}, albeit requiring different methods. 

\begin{theorem}\label{th:wp.when.untr.dns.t1}
  The unitary group $U(M)$ of a type-I von Neumann algebra $M\le \cL(H)$, $\dim H=\aleph_0$ is polynomially weak$^*$-dense (equivalently, dense $G_{\delta}$) in $M_{\le 1}$ if and only if the finite canonical summand of $M$ is non-atomic. 
\end{theorem}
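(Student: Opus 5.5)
The plan is to split along the type decomposition, settle the properly infinite part with \Cref{th:wp.when.untr.dns.prop.inf}, and handle the finite part through a matrix Möbius map combined with an oscillation argument. Write $M\cong M_{<\infty}\times M_{\text{pr}\infty}$. Both polynomial weak$^*$ convergence and unitarity are componentwise on central summands, so density for $M$ is equivalent to density for each summand. \Cref{th:wp.when.untr.dns.prop.inf} already handles $M_{\text{pr}\infty}$. The $G_\delta$ part is formal exactly as in that proof: $U(M)$ is $G_\delta$ in the weaker weak$^*$ topology on the Polish ball, and this transports to the stronger topology.

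\emph{Necessity.} Suppose the finite summand has an atomic part. Since $M$ is type I, that part is a product of factors $\cong M_n$ with $n<\infty$. Some $M_n$ is then a central quotient of $M$, and polynomial weak$^*$ density would give ordinary weak$^*$ density there. This is impossible, as in \Cref{le:dns.iff.no.fin.at}.

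\emph{Sufficiency.} The finite type I summand is $\prod_n L^\infty(X_n)\otimes M_n$, and the hypothesis says every $L^\infty(X_n)$ is diffuse. Since $\dim H=\aleph_0$, each $X_n$ may be taken to be a standard probability space. It then suffices to treat a single $N=L^\infty(X)\otimes M_n=L^\infty(X,M_n)$ with $X$ diffuse. Let $x\in N_{\le 1}$. The polynomial weak$^*$ topology is dominated by the norm topology on bounded sets, since $x\mapsto x^m$ is norm continuous. So we may replace $x$ by $rx$ with $r<1$ and assume $\|x(t)\|\le r<1$ for a.e. $t$.

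\emph{Key pointwise step.} For a strict contraction $a\in M_n$, use the matrix Möbius map
\begin{equation*}
  \Phi_a(z):=(1-aa^*)^{-1/2}(z+a)(1+za^*)^{-1}(1-a^*a)^{1/2},\quad |z|\le 1 .
\end{equation*}
It is holomorphic on a neighbourhood of the closed disc because $\|a\|<1$. It takes unitary values on $|z|=1$ (the standard Potapov/Halmos computation), and $\Phi_a(0)=a$ by the intertwining $(1-aa^*)^{-1/2}a=a(1-a^*a)^{-1/2}$. Then each power $\Phi_a^m$ is holomorphic, and the mean value property gives
\begin{equation*}
  \int_0^1\Phi_a(e^{2\pi i s})^m\,ds=a^m\quad\text{for all } m\ge 0 .
\end{equation*}
In other words, $x^m$ is the average of the powers of a unitary-valued function.

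\emph{Oscillation.} Since $X$ is diffuse and standard, identify it with $[0,1]$ carrying Lebesgue measure. For $k\ge 1$ define
\begin{equation*}
  u_k(t):=\Phi_{x(t)}\left(e^{2\pi i kt}\right).
\end{equation*}
This is a measurable unitary in $N$, since $(a,z)\mapsto\Phi_a(z)$ is jointly continuous. The remaining claim is that, for each fixed $m$,
\begin{equation*}
  u_k^m\xrightarrow[\quad k\quad]{\quad\text{weak}^*\quad}\left(t\mapsto\int_0^1\Phi_{x(t)}(e^{2\pi is})^m\,ds\right)=x^m .
\end{equation*}
This is a Riemann–Lebesgue type averaging statement.
\begin{itemize}[wide]
\item First take $x$ a simple function with values of norm $\le r$. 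On each piece, $e^{2\pi ikt}$ equidistributes on the circle as $k\to\infty$, which gives the limit against $L^1(X,M_n)$ test functions.
\item Then pass to general $x$ by uniform approximation by simple functions. The approximation preserves the bound $\le r$, so $\Phi_a^m$ is uniformly continuous in $a$ on $\{\|a\|\le r\}\times\mathbb{T}$, and uniform approximation suffices.
\end{itemize}

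The main obstacle is the pointwise step: producing a unitary-valued "random" dilation inside $L^\infty\otimes M_n$ itself, rather than on an enlarged space as in \Cref{re:unitary.power-dil}. The holomorphic Möbius map is what makes this work. The points I would verify most carefully are the unitarity of $\Phi_a$ on the circle and the uniformity estimates needed in the oscillation step.
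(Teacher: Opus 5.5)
Your proof is correct. It shares the paper's outer reductions: the central splitting, \Cref{th:wp.when.untr.dns.prop.inf} for the properly infinite part, necessity via a central $M_n$ quotient, $G_\delta$ for free, and one summand $L^\infty(X)\otimes M_n$ at a time. The core sufficiency argument, however, takes a genuinely different route.

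The paper works in $C(X,M_n)$ for a perfect hyperstonean $X$. It reduces to a constant $a$ and to normal states $\mu\otimes q_j$ with constant densities. It then seeks unitaries $\sum_i\chi_{X_i}u_i$ on clopens whose $\mu$-masses can be prescribed thanks to diffuseness. This reduces everything to the finite-dimensional fact $(1,a,\dots,a^m)\in\mathrm{cvx}\{(1,u,\dots,u^m):u\in U(n)\}$, which the paper imports from a general theorem on holomorphic images of the matrix ball.

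You instead prove an explicit integral form of that convex-hull statement, $a^m=\int_0^1\Phi_a(e^{2\pi is})^m\,ds$, via the Potapov--M\"obius disc through $a$. Your unitarity identity and $\Phi_a(0)=a$ check out, and $1+za^*$ is invertible for $|z|<1/\|a\|$. You then realize the average with the oscillating unitaries $u_k(t)=\Phi_{x(t)}(e^{2\pi ikt})$ and Riemann--Lebesgue, rather than with pieces of prescribed measure.

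What your route buys:
\begin{itemize}
\item It is self-contained.
\item It handles non-constant $x$ and arbitrary $L^1(X,M_n)$ test functionals in one stroke, through uniform continuity of $(a,z)\mapsto\Phi_a(z)^m$ on $\{\|a\|\le r\}\times\mathbb{T}$.
\item It yields a single sequence with $u_k^m\to(rx)^m$ weak$^*$ for all $m$ at once, not one unitary per neighbourhood.
\end{itemize}

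What it costs:
\begin{itemize}
\item It needs the harmless shrinking $x\mapsto rx$.
\item It identifies the diffuse spectrum with Lebesgue $[0,1]$, which uses separability of the predual. The paper's step-function route needs only that a diffuse normal measure attains all values on clopens, so it does not depend on a standard-measure-space model. This is immaterial here since $\dim H=\aleph_0$.
\end{itemize}

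For the countable product $\prod_n A_n\otimes M_n$, summand-wise density suffices because on bounded sets the weak$^*$ topology of the product is the product topology. You state this only for two summands, but the argument is the same.
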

\begin{proof}
  The equivalence between density and $G_{\delta}$ density has already been noted, and the forward implication $(\Rightarrow)$ follows from \Cref{th:u.wast.dense.iff}. It thus remains to prove density assuming $M$ type-I non-atomic, as well as finite (having disposed of the properly infinite canonical summand in \Cref{th:wp.when.untr.dns.prop.inf}).  

  The finiteness assumption gives a decomposition \cite[Theorem V.1.27]{tak1}
  \begin{equation*}
    M\cong \prod_{n\in \bZ_{>0}} A_n\otimes M_n(\bC)
    ,\quad
    A_n\text{ abelian}
    \quad
    \left(\text{product in the category of $W^*$-algebras}\right),
  \end{equation*}
  with $A_n\cong C(X_n)$ (continuous functions on the respective spectra $X_n:=\Spec(A_n)$) for \emph{hyperstonean} \cite[Theorem III.1.18]{tak1} spaces $X_n$: compact Hausdorff, with enough \emph{normal} measures (``normal'' meaning assigning nowhere dense sets mass 0 and ``enough'' as in such measures having trivial joint kernel in $C(X)$). The individual factors can be treated separately, so we may as well have
  \begin{equation*}
    M
    \cong
    C(X)\otimes M_n
    \cong
    C(X,M_n)
    :=
    \left\{
      \text{hyperstonean }X\xrightarrow[\quad\text{continuous}\quad]{\quad}M_n:=M_n(\bC)
    \right\}
  \end{equation*}
  throughout. The diffuseness assumption means precisely that $X$ is perfect (has no isolated points).

  An arbitrary element $a\in C(X,M_n)$, to be approximated polynomially weak$^*$ by unitaries, may and will be assumed constant with value again denoted by $a\in M_{m,\le 1}$ (by a slight notational abuse): it can always be uniformly approximated by locally constant elements by partitioning
  \begin{equation*}
    X=\bigsqcup_{i=1}^N X_i
    ,\quad
    \forall \left(1\le i\le N\right)
    \left(\mathrm{diam}~a(X_i)<\varepsilon\right)
    ,\quad
    X_i=\overline{X}_i=\accentset{\large\circ}{X}_i\subseteq X
  \end{equation*}
  for arbitrarily small $\varepsilon$, and working individually on each $X_i$. We will also fix normal states $\psi_j$, $1\le j\le J$ specifying the polynomial weak$^*$ neighborhood
  \begin{equation*}
    V_{\left(\psi_j\right)_j, m, \varepsilon}(a)
    :=
    \left\{
      g\in C(X,M_n)
      \ :\
      \forall\left(1\le j\le J\right)
      \forall\left(0\le k\le m\right)
      \left(\left|\psi_j(g^k)-\psi_j(a^k)\right|<\varepsilon\right)
    \right\}
  \end{equation*}
  of interest, necessarily of the form
  \begin{equation*}
    C(X,M_n)
    \ni
    f
    \xmapsto{\quad \psi_j\quad}
    \int_{X}\mathrm{tr}~f(x)\ \mathrm{d}\nu_j(x)
    \in
    \bC
  \end{equation*}
  for normal measures $\nu_j\in M(X,M_n)$ valued in the space of \emph{density $n\times n$ matrices} (trace-1 positive). We further simplify the setting in a number of ways.

  \begin{enumerate}[(a),wide]
  \item The density-valued measures $\nu_i$ can be norm-approximated arbitrarily well by simple counterparts
    \begin{equation*}
      \nu'_{i}=\sum_{j=1}^{k_i}\mu_{ij}q_{ij}
      ,\quad
      \begin{gathered}
        \mu_{ij}\text{ concentrated on }
        X_{ij}=\overline{X}_{ij}=\accentset{\large\circ}{X}_{ij}\subseteq X\\
        \text{density matrix }q_{ij}\in M_n;
      \end{gathered}
    \end{equation*}
    consequently, restricting attention to individual clopens, we can assume $\nu_j=\mu_j\otimes q_j$ simple. 
    
  \item Next, the $\mu_j$ can all be assumed $\mu$-continuous for a probability measure $\mu:=\mu_1$ (for $\left(\sum_j \mu_j\right)\otimes \left(\sum_j q_j\right)$ can always be appended to the family $\{\nu_j\}_j$) so that $\mathrm{d}\mu_j = f_j\mathrm{d}\mu$ for $f_j\in L^1(\mu)$ \cite[Theorem III.10.2]{ds_linop-1_1958}.

  \item Finally, $f_j$ may as well be step functions, for they are uniformly approximable by $C$-bounded step functions away from a set $Y\subseteq X$ with $\mu(Y)<\varepsilon$ for convenient cutoffs $\varepsilon,C>0$. For our purposes, restricting once again to convenient clopen subsets, the $f_j$ may all as well be constant.
  \end{enumerate}
  We seek unitary approximants among step functions again:
  \begin{equation*}
    u=
    \sum_{i=1}^N \chi_{X_i} u_i
    ,\quad
    \begin{gathered}
      \chi_{\bullet}:=\text{characteristic function of $\bullet$}\\
      X_{i}=\overline{X}_{i}=\accentset{\large\circ}{X}_{i}\subseteq X\\
      \forall\left(1\le i\le N\right)\left(u_i\in U(n)\right),\quad
    \end{gathered}
  \end{equation*}
  so that the desired approximation amounts to
  \begin{equation*}
    \forall\left(0\le k\le m\right)
    \forall\left(0\le j\le J\right)
    \left(\left|\sum_i \mu\left(X_i\right)\mathrm{tr}~u^k_i q_j-\mathrm{tr}~a^k q_j\right|<\varepsilon\right).
  \end{equation*}
  Given diffuseness again, the family of coefficients $\mu(X_i)$ can be arbitrary subject to the constraint that it constitute a convex combination. The conclusion, then, would follow assuming that
  \begin{equation*}
    \left(1,a,\cdots,a^m\right)
    \in
    \text{convex hull }
    \mathrm{cvx}~\left\{\left(1,u,\cdots,u^m\right)\ :\ u\in U(n)\right\}.
  \end{equation*}
  To conclude, recall that for \emph{any} holomorphic map $M_{n,\le 1}\xrightarrow{f}\bC^d$ defined on (a neighborhood of) the $n\times n$-matrix unit ball we have
  \begin{equation*}
    f\left(M_{n,\le 1}\right)
    \subseteq
    \text{convex hull }
    \mathrm{cvx}~f\left(\text{unitary group }U(n)\right)
  \end{equation*}
  by, say, \cite[Theorem 9 and Proposition 2(a)]{MR407330}. 
\end{proof}

\Cref{th:wp.when.untr.dns.t1} ensures partial polynomial weak$^*$ density in the unit ball under the sensible hypotheses. 

\begin{corollary}\label{cor:norm.pw.approx}
  For a von Neumann algebra $M\le \cL(H)$, $\dim H=\aleph_0$ with vanishing finite atomic factor the normal operators in $M_{\le 1}$ are contained in the polynomial weak$^*$ closure of $U(M)$. 
\end{corollary}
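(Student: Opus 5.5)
Since $M$ has vanishing finite atomic factor, I first split it centrally as $M\cong M_{\mathrm{pr}\infty}\times M_{<\infty,\not a}$. The properly infinite summand $M_{\mathrm{pr}\infty}$ is dealt with by \Cref{th:wp.when.untr.dns.prop.inf}, which gives polynomial weak$^*$ density of unitaries in the full unit ball, normal or not. The polynomial weak$^*$ topology on a finite product is the product topology, and for a normal contraction $x=(x_1,x_2)$ each component is again normal. So it suffices to show that every normal contraction in a finite diffuse von Neumann algebra $N$ is a polynomial weak$^*$ limit of unitaries of $N$.

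For the finite diffuse case I reduce to the abelian situation. Fix a normal contraction $x\in N_{\le 1}$, and let $A:=W^*(x,1)$ be the abelian von Neumann algebra it generates. Choose a masa $B\supseteq A$ of $N$. I claim $B$ is diffuse: a minimal projection $e$ of a masa $B$ satisfies $eNe=\bC e$ (because $eNe$ has $Be$ as a masa, and $Be=\bC e$), so $e$ would be minimal in $N$, contradicting diffuseness of $N$. Now $B$ is an abelian, hence type-I, finite and non-atomic von Neumann algebra on the separable space $H$. By \Cref{th:wp.when.untr.dns.t1} (with $n=1$, i.e.\ $C(X)$ for perfect hyperstonean $X$), $U(B)$ is polynomially weak$^*$ dense in $B_{\le 1}$, and $x\in B_{\le 1}$. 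The polynomial weak$^*$ topology of $B$ is just the restriction of that of $N$, since powers are computed in $B$ and the weak$^*$ topology of $B$ is the subspace topology from $N$ (normal functionals on $N$ restrict to normal functionals on $B$). Also $U(B)\subseteq U(N)$. Together these give the desired approximation of $x$ inside $N$.

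The main point to check is that the abelian case of \Cref{th:wp.when.untr.dns.t1} genuinely applies to $B$. That theorem allows a type-I algebra with non-atomic finite summand, and an abelian algebra is finite with decomposition $C(X)\otimes M_1$, so this is fine. In that case the final convexity step is even elementary: $(1,a,\dots,a^m)$ for $|a|\le1$ lies in the convex hull of $\{(1,u,\dots,u^m): |u|=1\}$, by the Poisson integral representation of the functions $z\mapsto z^k$ on the disc. The only other subtlety is the existence of a masa containing $A$, which is a standard Zorn argument. The hypothesis that $x$ is normal enters exactly there: a non-normal $x$ does not sit in any abelian subalgebra, so the argument does not reach it.
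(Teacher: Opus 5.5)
Your argument is correct and is essentially the paper's: you pass to a maximal abelian subalgebra containing $x$ and check that it is diffuse (your compression argument $eNe=\bC e$ is equivalent to the paper's observation that any $0\ne q<p$ in $M$ would commute with the masa, contradicting maximality), then invoke \Cref{th:wp.when.untr.dns.t1} in the abelian case. Where you differ, you are more careful than the paper: you explicitly split off the properly infinite summand and treat it with \Cref{th:wp.when.untr.dns.prop.inf}, and this step is genuinely needed, because the paper's short proof asserts that masas are ``non-atomic again''. That fails, e.g., for the diagonal masa of $\cL(\ell^2)$, which is the only masa containing a diagonal contraction with distinct eigenvalues; your split matches the one the paper itself performs in the proof of \Cref{th:u.wast.dense.iff.pw}.
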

\begin{proof}
  These generate abelian von Neumann subalgebras of $M$. Observe that every \emph{maximal} such $A\le M$ (with respect to inclusion) will be non-atomic again: were $0\ne p\in A$ minimal therein, $A=Ap\oplus A(1-p)$ and every $0\ne q< p$ strictly dominated by $p$ commutes with $A$. This reduces the problem to abelian $M$, case covered by \Cref{th:wp.when.untr.dns.t1}. 
\end{proof}

\begin{remark}\label{re:rec.dye}
  \Cref{th:wp.when.untr.dns.t1} being in place, one can recover Dye's result \cite[Theorem 1]{MR52695} on the weak$^*$ density of the unitary group in the unit ball of a diffuse von Neumann algebra by essentially the same device \cite[Lemma 2.4]{MR52695} employs (not operative in the polynomial weak$^*$ setting): \emph{polar decomposition} $a=u|a|$ \cite[Appendix III, \S 3]{dixw} for an arbitrary $a\in M$ reduces the problem to density in the normal unit ball, hence the conclusion by restricting attention to a(n automatically diffuse) maximal abelian self-adjoint algebra containing $|a|$.
\end{remark}

\pf{th:u.wast.dense.iff.pw}
\begin{th:u.wast.dense.iff.pw}
  To address the one non-trivial implication \Cref{item:th:u.wast.dense.iff.pw:can.quot} $\Rightarrow$ \Cref{item:th:u.wast.dense.iff.pw:g.delta}: \Cref{th:wp.when.untr.dns.prop.inf} reduces the problem to the finite non-atomic case, whence the conclusion by \Cref{th:wp.when.untr.dns.t1} applied to an (again diffuse) maximal abelian self-adjoint subalgebra containing a given normal element. 
\end{th:u.wast.dense.iff.pw}


\addcontentsline{toc}{section}{References}

\def\polhk#1{\setbox0=\hbox{#1}{\ooalign{\hidewidth
  \lower1.5ex\hbox{`}\hidewidth\crcr\unhbox0}}}


\Addresses

\end{document}